\documentclass{article}
\usepackage{amsmath}
\usepackage{amscd}
\usepackage{amsthm}
\usepackage{amssymb} \usepackage{latexsym}
\usepackage{eufrak}
\usepackage{euscript}
\usepackage{epsfig}
\usepackage{graphics}
\usepackage{array}
\usepackage{enumerate}

\theoremstyle{theorem}
\newtheorem{theo}{Theorem}[section]
\newtheorem{satz}{Proposition}[section]
\theoremstyle{corollary}

\theoremstyle{lemma}
\newtheorem{lem}{Lemma}[section]
\theoremstyle{definition}
\newtheorem{defi}{Definition}[section]
\theoremstyle{proof}

\theoremstyle{remark}

\newcommand{\bel}[1]{\begin{equation}\label{#1}}

\newcommand{\be}{\begin{equation}}
\newcommand{\ee}{\end{equation}}
\newcommand{\ba}{\begin{eqnarray}}
\newcommand{\ea}{\end{eqnarray}}
\newcommand{\rf}[1]{(\ref{#1})}

\newcommand{\qe}{\end{equation}}

\title{Harmonic nets in metric spaces}
\author{J\"urgen Jost\footnote{Max Planck
Institute for Mathematics in the Sciences, Leipzig, Germany;
jost@mis.mpg.de} and Leonard Todjihounde\footnote{Inst. de Math.
et de Sces Phys. (IMSP), Porto-Novo, Benin;
leonardt@imsp-uac.org}}

\begin{document}
\maketitle

\section{Introduction}
In this paper, we consider harmonic maps from weighted graphs into
metric spaces. This can be considered as a generalization of
geodesic lines in Riemannian manifolds. Our considerations are
based on the simple observation that  a geodesic considered as a
map $\gamma:[0,1]\to N$ from the unit interval to the Riemannian
manifold $N$ and parametrized proportionally to arclength is
characterized by the property that for all sufficiently close
$0\le a<b \le 1$, $\gamma(\frac{a+b}{2})$ is the unique midpoint
of $\gamma(a)$ and $\gamma(b)$, that is, \bel{0}
\gamma(\frac{a+b}{2})=\text{argmin}_{q\in N}
(d^2(\gamma(a),q)+d^2(\gamma(b),q)). \qe This leads us to
represent a geodesic as a string of points in $N$ each of which is
the midpoint of its two neighbors. At the same time, this allows
us flexible refinements, that is, we can insert additional points
in the string as midpoints of consecutive ones already present.
For that, it is useful to also consider the following slight
generalization of \rf{0} \bel{0a}
\gamma(ta+(1-t)b)=\text{argmin}_{q\in N}
(td^2(\gamma(a),q)+(1-t)d^2(\gamma(b),q)) \qe
$(0<t<1)$.\\
A midpoint is a center of gravity of two points. In a Riemannian
manifold, such centers of gravity exist locally uniquely, that is,
when the points whose center is to be constructed are sufficiently
close. Globally, uniqueness need not be true. Therefore, we may need to localize in the image.\\
It is then clear how to conceptualize a harmonic map from a
weighted graph into $N$. We simply require that the images of the
nodes of the graph are appropriately weighted centers of gravity
of their neighbors. Here, in order to localize in the image, we
might need to refine the graph by subdividing edges.
\newpage
\noindent Harmonic maps from graphs into compact Riemannian
manifolds were studued in  \cite{KS}. Our approach, however, 
naturally leads to a generalization to metric
space targets that locally admit such unique centers of gravity.
This class of metric spaces includes the important class of
Alexandrov spaces with upper curvature bounds, see \cite{BN}
as a systematic reference. \\
Thus, in this paper, we show the existence of harmonic maps from
weighted graphs into such metric spaces, shortly called harmonic
nets. (When we have a space with non-positive curvature in the sense
of Alexandrov or Busemann, this result is contained in a general
existence result for harmonic maps of the first author, see
\cite{J1,J2a}.) The proof is not difficult. It is based on the
iterative replacement of image points by the centers of gravity of the
images of their neighbors, following the strategy described in
\cite{J3}, together with suitable adaptive refinements to keep the
constructions local. Here it is important that the
domain, that is, our graph, can be treated as a one-dimensional
object. While two dimensions represent a border line case, in higher
dimensions, general constructions of harmonic maps are only possible
when the target space possesses non-positive curvature. The reason
that is that the energy functional we are employing is quadratic, and
therefore the scaling behavior is different in dimensions 1, 2, and
greater than 2. Therefore, the essential features of our scheme are
local uniqueness and the scaling property of our functiona.\\
Our constructions possess certain similarities with some schemes
employed in numerical analysis, like the standard difference
scheme for the numerical solution of the Laplace equation or
adaptive refinements in multigrid methods. A key conceptual
feature of our approach is that we systematically exploit local
uniqueness of solutions and that we need to make explicit only the
images of a discrete set of points because then all other images
are implicitly determined by that local uniqueness. Therefore, as
in good numerical schemes, we never have to work out or store more
information than we actually need.

\section{Geometric concepts}

We let $(N,d)$ be a complete metric space. For abbreviation, we
usually simply write $N$ in place of $(N,d)$, the metric $d(.,.)$
being implicitly understood. We say that $N$ admits refinements if
for any $p,q \in N$, there exists some $m \in N$ with \bel{1}
d(m,p)=d(m,q)=\frac{1}{2}d(p,q).
\end{equation}
We also call such an $m$ a midpoint of $p$ and $q$.
\begin{defi}
Suppose that $N$ admits refinements. We define the radius $r(N)$
of unique refinement as the largest $r \in [0,
\infty]$ with the property that for any two $p,q \in N$ with
$d(p,q) \le 2r$, their refinement (midpoint) $m=m(p,q)$ is unique.
\end{defi}
\newpage
\noindent More generally, we say that $q \in N$ is a center of
gravity of the finitely many points $p_1,...,p_n \in N$ with
weights $w_1,...,w_n$ $(>0)$ if \bel{2} q = \mbox{argmin}_p
\sum_{j=1}^n w_j^2 d^2(p,p_j).
\end{equation}
We define the convexity radius $c(N)$ as the largest $c \in [0,
\infty]$ with the property that whenever $p,p_1,...,p_n$ are
points in $N$ with $d(p,p_i) \le c$ for $i=1,...,n$, then the
center of
gravity of $p_1,...,p_n$ (with any positive weights) is unique.\\
Since a midpoint of two points is their center of gravity when they
are given equal weights, we have obviously $0\le c(N) \le r(N)$.\\\\
We let $\Gamma$ be a finite weighted graph with vertex set $I$ and
edge set $E$ where each $e \in E$ has a weight $w(e) >0$. We say
that two vertices $i,j$ are neighbors if they are connected by an
edge. Thus, for our purposes, a graph is a discrete set $I$
together with a symmetric neighborhood relation $\sim$ and
(symmetric) weights $w(i,j)=w(e)$ for any two neighboring vertices
$i,j$ (i.e. $i \sim j$) connected by the edge $e$.\\\\

\noindent We define the refinement $\Gamma_r$ of $\Gamma$ as the
graph with vertex set $I \cup E$ with $i \in I$ and $e \in E$
being neighbors when $i \in e$ in $\Gamma$; there are no other
pairs of neighbors in $\Gamma_r$. The weight of such a pair of
neighbors is $w(i,e)=\sqrt{2}w(e)$ where the latter is the weight
of the edge $e$ in $\Gamma$. The edge set of $\Gamma_r$ then is
obvious. We can then also define successive
refinements of $\Gamma_r$.\\
A map $f$ from $\Gamma$ to $N$ assigns to every $i \in I$ some
point $p=f(i)$ in $N$. \\
We define the {\bf energy} of such an $f:\Gamma \to N$ as \bel{3}
E(f) = \sum_{i \in I} E_i(f)\;,
\end{equation}
\bel{4} \mbox{with} \;\;\; E_i(f)=\sum_{j \in I;\ i \sim j}
w^2(i,j) d^2(f(i),f(j)).
\end{equation}
In particular, for $i\sim j$, \bel{5} d^2(f(i),f(j)) \le
\frac{1}{w^2(i,j)}E_i(f).
\end{equation}

\noindent We say that the map $f$ is {\bf harmonic} if for all
$i\in I$, $f(i)$ is a center of gravity of the points $f(j)$,
$j\sim i$, with weights $w_j =w^2(i,j)$.
\newpage
\section{Characterization by angles in tangent cones}

The above concepts of refinement and center of gravity find their
natural place in the context of Alexandrov's metric spaces. For a
systematic development of this
theory that we shall use in this section, see \cite{BN}.\\
These spaces enjoy particular properties when their (Alexandrov)
curvature is bounded from above. It is part of the definition of
such a space of curvature bounded from above that any two
sufficiently close points can be joined by a shortest geodesic
which then is in fact unique and depends continuously on these
endpoints.
 (We may also
parametrize it by arclength -- and call it an arclength geodesic
-- if convenient.) Some of the general notions in the theory,
however, do not need the assumption of an upper curvature bound.
That assumption then is rather employed to derive
geometric properties of the objects defined in the theory.\\
An important concept here is the tangent cone of a metric space at
a point. Let $(N,d)$ be a metric space, $\gamma_1\;,\;\gamma_2 :
[0\;,\;\varepsilon] \longrightarrow (N,d)$ be arclength geodesics
emanating from a point $P \in N$. \\
Consider points $Q \in \gamma_1$, $R \in \gamma_2$ different from
$P$. An (upper) angle $\theta (\gamma_1 , \gamma_2)$ between
$\gamma_1$ and $\gamma_2$ is defined as
\[\cos \theta (\gamma_1 , \gamma_2) := \overline{\lim}_{Q,R \to
  P}\frac{d^2(P,Q) + d^2(P,R) - d^2(Q,R)}{2 d(P,R)d(P,Q)}\;.\]

\noindent We have the following characterization of the angle
between $\gamma_1$ and $\gamma_2$ (see \cite{BH}, [II.1-II.3]):
let $(N,d)$ be a metric space whose curvature is bounded from
above by a constant $K \ge 0$. Then
\[\cos \theta (\gamma_1 , \gamma_2) = \lim_{s \to
  0}\frac{d(P,\gamma_2(\varepsilon)) -
  d(\gamma_1(s),\gamma_2(\varepsilon))}{s}\;,\]
provided in case  $K > 0$ that $\varepsilon$ is less than the
diameter
of the comparison model space of constant curvature $K$. \\
A geodesic curve $\gamma$ starting at a point $P \in N$ has a
direction if $\theta (\gamma , \gamma) = 0$ and two curves have
the same direction if the angle between them is equal to zero.
This is an equivalence relation on the space of curves starting
from the same point $P \in N$ and the completion of the set of
equivalence classes (endowed with the distance induced by the
angle) is called the space of directions $\Omega_P(N)$ of $N$ at
the point $P$. \\ The tangent cone $T_PN$ of $(N,d)$ at a point $P
\in N$ is the cone over the space of directions, namely $\;T_PN =
(\Omega_P(N) \times {\mathbb R}_+)\;/\;  (\Omega_P(N) \times
\{0\})$. \\
We will denote a tangent element by $[\gamma , x]$, where $\gamma
\in \Omega_P(N)$, $x \ge 0$ and elements $[\gamma , 0]$ are
identified with
the origin $O_p$ of $T_PN$. \\
The distance $d$ in $N$ induces on $T_PN$ a distance function
$\tilde d_P$ defined by
\begin{displaymath}
\tilde d_P^2 ([\gamma_1 , x_1]\;,\;[\gamma_2 , x_2]) = \left\{
\begin{array}{ccc}
 x_1^2 + x_2^2 - 2x_1x_2\cos \theta (\gamma_1 , \gamma_2) &\mbox{if} &
\theta (\gamma_1 , \gamma_2) < \pi \\
x_1 + x_2 & \mbox{if} & \theta (\gamma_1 , \gamma_2) \ge \pi
\end{array} \right.
\end{displaymath}
\newpage
\noindent For those $[\gamma , x]$ for which we can find a unique
geodesic from $P$ with direction $\gamma$ that can be extended up
to distance $x$, we define that point as the exponential image of
$[\gamma , x]$. The inverse of this exponential map, the
projection map from the subset of $N$ where it is defined to the
tangent cone $T_PN$ is denoted by $\pi_P$. In the case of (simply
connected, complete) nonpositively  curved metric spaces, it is
well known that the map $\pi_P$ is defined everywhere, distance
non-increasing and distance preserving in the radial direction
(see \cite{W}). \\
The following important result has been proved by Nikolaev
\cite{Ni}:

\begin{lem}
Let $(N,d)$ a metric space of curvature $\le K$, with $K \ge 0$.
Then the tangent cone at a point of $N$ is a space of non-positive
curvature in the sense of Alexandrov.
\end{lem}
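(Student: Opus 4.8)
The plan is to exploit the fact that, by construction, $T_P N$ is the Euclidean cone over the space of directions $\Omega_P(N)$: the metric $\tilde d_P$ displayed above is precisely the cone metric in which the angle $\theta(\gamma_1,\gamma_2)$ (capped at $\pi$) serves as the distance between the directions $[\gamma_1]$ and $[\gamma_2]$ in $\Omega_P(N)$. I would therefore first reduce the statement to a property of $\Omega_P(N)$ alone, using the characterization of Berestovskii (see \cite{BN,BH}): the Euclidean cone over a metric space $\Sigma$ is of non-positive curvature in the sense of Alexandrov (i.e.\ CAT$(0)$) if and only if $\Sigma$ is a space of curvature $\le 1$ (i.e.\ CAT$(1)$). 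Granting this, the lemma is equivalent to the assertion that $\Omega_P(N)$ has curvature $\le 1$.

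Next I would establish that the space of directions of a space of curvature $\le K$ has curvature $\le 1$. Here I would first record the standard preliminary facts that become available once an upper curvature bound is assumed: for sufficiently close points the geodesic from $P$ is unique and varies continuously, the upper angle $\theta$ is in fact a genuine limit, and the angle satisfies the triangle inequality, so that $(\Omega_P(N),\theta)$ is a genuine metric space. The comparison property would then be obtained by a blow-up argument: one compares an infinitesimal triangle formed by three geodesics issuing from $P$ with its comparison triangle in the model space $M_K$ of constant curvature $K$, lets the endpoints tend to $P$, and checks that in the limit the angle relations reproduce exactly the spherical (CAT$(1)$) comparison for the corresponding triangle of directions in $\Omega_P(N)$.

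An alternative and conceptually cleaner route, which I would keep in mind as a cross-check, is to realize $T_P N$ as the pointed Gromov--Hausdorff limit of the rescaled spaces $(N,\lambda d, P)$ as $\lambda \to \infty$. Rescaling the metric by $\lambda$ multiplies the curvature bound by $\lambda^{-2}$, so these spaces have curvature $\le K/\lambda^2 \to 0$; since an upper curvature bound is inherited by such limits, the limit $T_P N$ would have curvature $\le 0$. This makes the geometric content transparent, but it requires the limit characterization of the tangent cone together with the stability of upper curvature bounds under Gromov--Hausdorff convergence.

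I expect the main obstacle to be the limit step in the blow-up argument: one must show that the upper angles actually converge (not merely that their upper limits are controlled) and that the CAT$(K)$ comparison, which holds for honest triangles in $N$, survives the passage to the infinitesimal limit, including the delicate bookkeeping when some of the angles approach $\pi$, where the cone metric $\tilde d_P$ switches branches. This is exactly the content of Nikolaev's theorem, and the virtue of the cone reduction via Berestovskii's criterion is that it isolates this convergence as the single point that must be verified.
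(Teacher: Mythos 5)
The paper does not prove this lemma at all: it is quoted as a known result of Nikolaev \cite{Ni}, so there is no internal argument to compare yours against. Judged on its own terms, your outline correctly identifies the standard architecture: reduce via Berestovskii's criterion (the Euclidean cone $C(\Sigma)$ is CAT$(0)$ iff $\Sigma$ is CAT$(1)$, noting that $\tilde d_P$ as displayed in the paper is exactly the cone metric over $(\Omega_P(N),\theta)$), and then show that the space of directions of a space of curvature $\le K$ has curvature $\le 1$. The genuine gap is that this second step is never carried out: as you yourself concede in the final paragraph, the convergence of the comparison angles in the blow-up, including the behaviour near $\theta=\pi$ and the passage from upper limits to limits, \emph{is} Nikolaev's theorem. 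So the proposal reduces the lemma to an equivalent statement and then stops; nothing in it could be turned into a proof without importing the cited result wholesale. That is not a wrong approach, but it is a deferral rather than a proof.

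Two technical cautions on the alternative route you sketch. First, upper curvature bounds are not automatically inherited by pointed Gromov--Hausdorff limits: the condition ``curvature $\le K$'' is local, and the size of the neighbourhoods on which the CAT$(K)$ comparison holds can degenerate along a sequence; in the rescaling $(N,\lambda d,P)$ this is salvageable precisely because a fixed comparison neighbourhood of $P$ is blown up to arbitrarily large scale, but that needs to be said, and one still needs the limit to be a geodesic space to apply the four-point characterization. Second, without local compactness the rescaled spaces need not converge in the Gromov--Hausdorff sense at all, and the tangent cone defined via $\Omega_P(N)$ need not coincide with a blow-up limit; so the ``conceptually cleaner'' route quietly assumes hypotheses (properness, existence of the limit) that the lemma as stated does not grant. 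If you want a self-contained argument, the first route is the right one, but you must actually prove the CAT$(1)$ property of $(\Omega_P(N),\theta)$ from the CAT$(K)$ triangle comparison in $N$ rather than cite it.
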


\noindent Let $P$, $Q$, $R$ be points in $N$ and $Q_s \equiv
(1-s)P + sQ$ the point on a distance realizing geodesic joining
$P$ and $Q$ with distance $s.d(P,Q)$ from $P$. \\ We have the
following Taylor
expansions: \\
 \[d^2 (Q_s , R) = d^2(P,R) - 2sd(P,R) \cos \theta_P(Q,R) +
a(s)\;\;,\;\mbox{with}\;\lim_{s \to 0}\frac{a(s)}{s} = 0\]
\[\tilde d_P^2(\pi_P(Q_s) , \pi_P(R)) = d^2(P,R) - 2sd(P,R) \cos
\theta_P(Q,R) + b(s)\;\;,\;\mbox{with}\; \lim_{s \to
0}\frac{b(s)}{s} = 0\;,\] where $\theta_P (Q,R)$ denotes the angle
subtended by $Q$ and $R$ at $P$.\\ \\
\noindent We recall the
following result concerning  harmonic maps (see \cite{IN,W}):

\begin{satz}
Let $f : \Gamma \longrightarrow (N,d)$ be an harmonic map, then: \\
$(i):\;\;\;\forall \;i \in I\;, \;\pi_{f(i)}(f(i))$ minimizes the
function $\underset{j \sim i}{\sum}w^2 (i,j) \tilde d^2_{f(i)}
(.\; ,\;
\pi_{f(i)}f(j))$ in $T_{f(i)}N$. \\
$(ii):\;\;\;\forall \;i \in I \;, \;\underset{j \sim i}{\sum}w^2
(i,j)\; \langle V\; , \;\pi_{f(i)}f(j)\rangle \; \le 0 \;,
\;\forall \; V \in T_{f(i)}N$, \\where $\langle \;,\;\rangle $
denotes the inner product defined on $T_{f(i)}N$ by: \\ $\langle
[\gamma_1,x_1]\;,\;[\gamma_2,x_2]\rangle \;= x_1x_2 \cos \theta
(\gamma_1 , \gamma_2)$ \\
$(iii):\;\;\;\forall \;i \in I$, the barycenter in $T_{f(i)}$ of
the points $(\pi_{f(i)}f(j))_{j \sim i}$ with weights $(\frac{w^2
(i,j)}{w (i)})_{j \sim i}$ coincides with the origin $\;O_i :=
\pi_{f(i)}f(i)$ of $T_{f(i)}N$, where $w (i) = \underset{j \sim
i}{\sum}w^2 (i,j)$.
\end{satz}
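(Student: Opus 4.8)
My plan is to regard part $(ii)$ as the basic first-variation inequality and to derive $(i)$ and $(iii)$ from it. Harmonicity means that each $f(i)$ is the center of gravity of its neighbours, i.e.\ the global minimizer over $N$ of $F(p):=\sum_{j\sim i}w^2(i,j)\,d^2(p,f(j))$. I would fix $i$, choose any arclength geodesic leaving $f(i)$ toward a nearby point $Q$, and set $Q_s=(1-s)f(i)+sQ$. From $F(Q_s)\ge F(f(i))$ for small $s>0$ I would substitute the first Taylor expansion $d^2(Q_s,f(j))=d^2(f(i),f(j))-2s\,d(f(i),f(j))\cos\theta_{f(i)}(Q,f(j))+a_j(s)$, subtract $F(f(i))$, divide by $s$ and let $s\to 0^{+}$; the remainders vanish since $a_j(s)/s\to0$ and the index set is finite. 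This gives $\sum_{j\sim i}w^2(i,j)\,d(f(i),f(j))\cos\theta_{f(i)}(Q,f(j))\le 0$. Writing $\pi_{f(i)}f(j)=[\gamma_{ij},d(f(i),f(j))]$ and $V=[\gamma_Q,x]$, the inner product on $T_{f(i)}N$ identifies this left-hand side with $\tfrac1x\sum_{j\sim i}w^2(i,j)\langle V,\pi_{f(i)}f(j)\rangle$, so the inequality of $(ii)$ holds for every $V$ whose direction comes from a genuine geodesic of $N$.

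To complete $(ii)$ I would extend this to all of $T_{f(i)}N$. Because $\Omega_{f(i)}(N)$ is by definition the completion of the equivalence classes of geodesics issuing from $f(i)$, such directions are dense, and $V\mapsto\langle V,\pi_{f(i)}f(j)\rangle$ is continuous in the cone metric through the continuity of the angle; passing to the limit yields $\sum_{j\sim i}w^2(i,j)\langle V,\pi_{f(i)}f(j)\rangle\le 0$ for all $V$. This density-and-continuity step is the one I expect to require the most care, since it is where the completion in the definition of the space of directions and the cone topology genuinely enter; the remaining two parts are then essentially formal.

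For $(i)$ I would pass to the tangent cone, which by the preceding lemma of Nikolaev is non-positively curved in the sense of Alexandrov. Hence each $V\mapsto\tilde d_{f(i)}^2(V,\pi_{f(i)}f(j))$, and thus the positive combination $G(V):=\sum_{j\sim i}w^2(i,j)\,\tilde d_{f(i)}^2(V,\pi_{f(i)}f(j))$, is convex along geodesics. The radial geodesic from $O_i=\pi_{f(i)}(f(i))$ to any $V=[\gamma,x]$ is $s\mapsto[\gamma,sx]$, so differentiating the cone metric directly (in agreement with the second Taylor expansion for $\tilde d_{f(i)}^2$) shows that the derivative of $G$ at $O_i$ in the direction of $V$ equals $-2\sum_{j\sim i}w^2(i,j)\langle V,\pi_{f(i)}f(j)\rangle$, which is $\ge 0$ by $(ii)$. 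A convex function whose one-sided derivative is non-negative in every direction is minimized at the base point, so $O_i$ minimizes $G$, which is $(i)$.

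Finally $(iii)$ is a restatement of $(i)$. The barycenter of $(\pi_{f(i)}f(j))_{j\sim i}$ with the normalized weights $\big(\tfrac{w^2(i,j)}{w(i)}\big)_{j\sim i}$ is by definition the minimizer of $\tfrac1{w(i)}G$, which has the same minimizer as $G$ since $w(i)>0$; by $(i)$ this is $O_i$. As $T_{f(i)}N$ is non-positively curved the relevant squared-distance combination is strictly convex and its minimizer, hence the barycenter, is unique, so it coincides with $O_i=\pi_{f(i)}(f(i))$.
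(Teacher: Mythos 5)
Your argument is correct, but it is worth noting that the paper does not actually prove this Proposition: it merely \emph{recalls} the statement, citing Izeki--Nayatani and Wang, so there is no internal proof to compare against. What you supply is a self-contained first-variation argument, and it is organized in the natural way: harmonicity gives $F(Q_s)\ge F(f(i))$ for $F(p)=\sum_{j\sim i}w^2(i,j)d^2(p,f(j))$, the paper's Taylor expansion of $d^2(Q_s,\cdot)$ turns this into the angle inequality $\sum_j w^2(i,j)\,d(f(i),f(j))\cos\theta_{f(i)}(Q,f(j))\le 0$, and the cone inner product rewrites that as $(ii)$ for geodesic directions; density of geodesic directions in $\Omega_{f(i)}(N)$ (which holds by the very definition of the space of directions as a completion) plus continuity of $V\mapsto\langle V,\pi_{f(i)}f(j)\rangle$ in the cone metric (via the triangle inequality for upper angles) extends it to all of $T_{f(i)}N$. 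Your derivation of $(i)$ is particularly clean because along the radial ray $s\mapsto[\gamma,sx]$ each term $s^2x^2+y_j^2-2sxy_j\cos\theta$ is an explicit convex quadratic whose derivative at $s=0$ is $-2\langle V,\pi_{f(i)}f(j)\rangle$, so you do not even need the full strength of Nikolaev's lemma there --- only for the uniqueness of the barycenter in $(iii)$, where strict convexity of $\tilde d^2(\cdot,W)$ on the non-positively curved cone is genuinely used. Two small points you should make explicit if you write this up: the statement presupposes that each $\pi_{f(i)}f(j)$ is well defined, i.e.\ that there is a (unique) geodesic from $f(i)$ to $f(j)$ whose direction exists, which is where the upper curvature bound and a prior refinement of $\Gamma$ enter; and the paper's convention that a center of gravity with weights $w_j$ minimizes $\sum w_j^2 d^2$ is not literally consistent with the weights appearing in the Proposition, so you are right to read the functional as $\sum_{j\sim i}w^2(i,j)\,d^2(\cdot,f(j))$, matching $E_i(f)$.
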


\noindent The inequality $(ii)$ in the above proposition will be
interpreted as the critical condition for harmonic nets.
\newpage
\section{Refining maps}

If $N$ admits refinements, we can construct a  refinement
$f_r:\Gamma_r \to N$ of a  map $f:\Gamma \to N$ by assigning to
every edge $e$ connecting $i$ and $j$ in $\Gamma$ a midpoint of
$f(i)$ and $f(j)$. We observe that for each $i \in \Gamma$, we
have \bel{5aa} E_i(f_r)=\frac{1}{2}E_i(f)
\end{equation}
where on the left hand side, $i$ is considered as an element of
$\Gamma_r$. Also, \bel{5bb} \sum_{i \in I} E_i(f_r)=\sum_{e \in E}
E_e(f_r)=\frac{1}{2}E(f_r)
\end{equation}
by symmetry, where we consider the $i$s and $e$s as vertices of
$\Gamma_r$. In particular, we have from \rf{5aa}, \rf{5bb}

\begin{lem}
\bel{6} E(f_r)=E(f).
\end{equation}
If $f:\Gamma \to N$ is harmonic then so is its refinement $f_r$.
\end{lem}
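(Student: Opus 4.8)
The plan is to treat the two assertions separately, and to notice that the energy identity is pure bookkeeping while the preservation of harmonicity is the place where the geometry of Section 3 enters. For \rf{6} I would first observe that no geometry beyond the two facts \rf{5aa} and \rf{5bb} already recorded is needed. Summing \rf{5aa} over $i\in I$ gives $\sum_{i\in I}E_i(f_r)=\tfrac12\sum_{i\in I}E_i(f)=\tfrac12E(f)$, while \rf{5bb} identifies this same sum with $\tfrac12E(f_r)$; comparing the two equalities yields $E(f_r)=E(f)$. The only input is that $f_r(e)$ is a midpoint, so that $d^2(f(i),f_r(e))=\tfrac14 d^2(f(i),f(j))$ for $e=(i,j)$, which is precisely what makes \rf{5aa} hold.

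For the harmonicity of $f_r$ I would verify the center-of-gravity condition at the two kinds of vertices of $\Gamma_r$ separately. At a vertex arising from an edge $e=(i,j)$ the only neighbours are its endpoints $i,j$, and the two incident weights $w(i,e)=w(j,e)=\sqrt2\,w(e)$ coincide. Thus the functional to be minimized is, up to the positive constant $2w^2(e)$, simply $p\mapsto d^2(p,f(i))+d^2(p,f(j))$, whose minimizer with these equal weights is a midpoint of $f(i)$ and $f(j)$, as recorded in Section 2. Since $f_r(e)$ was chosen to be such a midpoint, $f_r$ is automatically a center of gravity of its neighbours at every edge vertex, with no curvature input.

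The real content lies at a vertex $i\in I$, whose $\Gamma_r$-neighbours are the edges $e=(i,j)$ with $f_r(e)=m_{ij}$ the midpoint and incident weight $w^2(i,e)=2w^2(i,j)$. I would work in the tangent cone $T_{f(i)}N$, which is nonpositively curved by Lemma 3.1. Because $m_{ij}$ lies on the geodesic from $f(i)$ to $f(j)$ at half its length, $\pi_{f(i)}m_{ij}=\tfrac12\,\pi_{f(i)}f(j)$: the direction is unchanged and only the radial coordinate is halved. The map $[\gamma,x]\mapsto[\gamma,\tfrac12 x]$ is a homothety of ratio $\tfrac12$ fixing the origin $O_i$, while the normalized weights are left untouched since $\tfrac{2w^2(i,j)}{2w(i)}=\tfrac{w^2(i,j)}{w(i)}$ with $w(i)=\sum_{j\sim i}w^2(i,j)$. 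Hence the barycentric characterization of harmonicity of $f$ in Proposition 3.1(iii) transports under the homothety: the barycenter of the $\pi_{f(i)}m_{ij}$ with these weights is $\tfrac12 O_i=O_i$. Equivalently, in the first-order expansion of $d^2(Q_s,R)$ recorded above the linear coefficient is $-2\,d(f(i),R)\cos\theta_{f(i)}(Q,R)$; replacing $R=f(j)$ by $m_{ij}$ keeps the angle but halves $d(f(i),R)$, hence halves this coefficient, so the critical inequality Proposition 3.1(ii) passes from $f$ to $f_r$ at $i$.

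The step I expect to be the crux is upgrading this critical/barycentric condition to the genuine minimizing property demanded by the definition of a harmonic map. A vanishing (or nonpositive) first variation pins down a minimizer only when the functional $p\mapsto\sum_{j\sim i}2w^2(i,j)\,d^2(p,m_{ij})$ is convex along geodesics near $f(i)$, which the upper curvature bound guarantees as soon as all target points lie within the convexity radius $c(N)$ of $f(i)$. This localization survives refinement for free, since $d(f(i),m_{ij})=\tfrac12 d(f(i),f(j))\le d(f(i),f(j))$, so the refined configuration is no less concentrated than the original one, for which harmonicity of $f$ already required the points $f(j)$ to sit within such a ball. Inside that ball the functional is strictly convex, its unique minimizer coincides with its critical point, and therefore $f_r(i)=f(i)$ is the center of gravity of the $m_{ij}$. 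Together with the edge-vertex case this shows $f_r$ is harmonic.
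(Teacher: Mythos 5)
Your derivation of \rf{6} is exactly the paper's: the identity follows by summing \rf{5aa} over $i\in I$ and comparing with \rf{5bb}, and the only geometric input is the midpoint relation $d(f(i),f_r(e))=\tfrac12 d(f(i),f(j))$. Your treatment of the new (edge) vertices of $\Gamma_r$ is also correct and elementary: with two neighbours of equal weight one has $d^2(p,f(i))+d^2(p,f(j))\ge\tfrac12\bigl(d(p,f(i))+d(p,f(j))\bigr)^2\ge\tfrac12 d^2(f(i),f(j))$, with equality exactly at midpoints, so the chosen midpoint is automatically a centre of gravity of its two neighbours in any metric space.

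For the old vertices $i\in I$ the paper offers no argument, so there is nothing to match against; but be aware that your tangent-cone route silently strengthens the hypotheses of the lemma. You need an upper curvature bound for Lemma 3.1 and Proposition 3.1 to apply; you need $d(f(i),f(j))\le 2r(N)$ so that $m_{ij}$ lies on the unique geodesic and $\pi_{f(i)}m_{ij}=\tfrac12\pi_{f(i)}f(j)$ is meaningful; and you need the \emph{converse} of Proposition 3.1 (critical point $\Rightarrow$ minimiser), which the paper states only in one direction and which you supply via convexity inside the convexity radius --- and even then you obtain only a local minimiser, whereas \rf{2} is a global argmin. None of these assumptions appear in the statement. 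There is a purely metric argument that avoids all of this: for any $g:\Gamma_r\to N$ the triangle inequality together with $(a+b)^2\le 2a^2+2b^2$ gives $E(g|_I)\le E(g)$, the refined weights $\sqrt2\,w(e)$ being calibrated precisely so that this holds. If some $p$ satisfied $\sum_{j\sim i}2w^2(i,j)d^2(p,m_{ij})<\sum_{j\sim i}2w^2(i,j)d^2(f(i),m_{ij})=\tfrac12E_i(f)$, the map $g$ equal to $f_r$ except for $g(i)=p$ would have $E(g)<E(f_r)=E(f)$, hence $E(g|_I)<E(f)$; but $g|_I$ differs from $f$ only at $i$, so harmonicity of $f$ at $i$ forces $E(g|_I)\ge E(f)$, a contradiction. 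This proves the minimising property at old vertices with no curvature bound and no localization, and it is closer in spirit to the energy bookkeeping surrounding \rf{6}. Your argument is workable in the localized Alexandrov setting the paper eventually adopts, but you should either state the extra hypotheses explicitly or switch to the global comparison above.
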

\noindent The converse holds when distances between images are
sufficiently small, that is, when midpoints between the images of
neighbors are unique.\\
\\
From \rf{5} and \rf{6}, we conclude that by performing
sufficiently many successive refinements, we may assume that all
distances between the images of any two neighboring vertices are
smaller than some prescribed $\epsilon >0$, for example smaller
than $r(N)$ or $c(N)$ when that quantity  is positive.

\section{Homotopy classes}
For the present purposes, we write $r(f)$ and $r(\Gamma)$ instead
of $f_r$ and $\Gamma_r$ because we wish to consider the refinement
as an operation that can be iterated. For example, $r^2(\Gamma) =
(\Gamma_r)_r$ is obtained as the refinement of $\Gamma_r$. A
refinable map $f:\Gamma \to N$ then is considered as a collection
of iteratively
refined maps $r^n(f):r^n(\Gamma) \to N$ for $n \in {\mathbb N}$.\\
Assume now that the refinement radius $r(N) >0$. We say that two
maps $f_1, f_2:\Gamma \to N$ are geodesically close if for every
$i \in \Gamma$, $d(f_1(i),f_2(i)) \le 2r(N)$, that is, the images
of $i$ under $f_1$ and $f_2$ have a unique midpoint. A refinement of
the pair $f_1,f_2$ then is defined to be the triple $f_1,f_{1,2},f_2$
where $f_{1,2}$ is the midpoint map of $f_1$ and $f_2$, that is, for
every $i \in \Gamma$, $f_{1,2}(i)$ is the midpoint of $f_1(i)$ and $f_2(i)$.\\
We say that two refinable maps $f, g:\Gamma \to N$ are
geometrically homotopic if there exists a sequence $f_0 = f, f_1,
f_2,...,f_A=g$ for some $A \in {\mathbb N}$ of refinable maps such
that for any $n \in {\mathbb N}$ and any $1 \le j \le A$, the
maps $r^n(f_{j-1})$ and $r^n(f_j)$ are geodesically close. This
sequence can again be refined by putting in midpoint maps between
consecutive sequence elements.\\
Geometric homotopy is an equivalence relation, and the equivalence
classes are called geometric homotopy classes of refinable maps
from $\Gamma$ to $N$.

\section{Construction of harmonic nets}
We assume that $N$ admits centers of gravity. By subdividing
suitable edges of $\Gamma$ as above, we may assume that $\Gamma$
is bipartite, that is, its vertex set is a disjoint union $I=I_1
\cup I_2$ such that all the neighbors of any point in one of those
subsets are contained in the other one. On the space $C=C(\Gamma
,N)$ of maps $f:\Gamma \to N$, we define maps $\rho_\alpha:C \to
C$, $\alpha=1,2$ with $\rho_\alpha (f)$ being the map obtained
from $f:\Gamma \to N$ by replacing the image of every $f(i)$ for
$i \in I_\alpha$ by a center of gravity of the $f(j)$ for $j\sim
i$. As long as the centers of gravity are not unique, we need to
make choices here, but in the situation where $c(N) >0$, we can
assume that $\Gamma$ has been sufficiently refined (depending on
an upper bound $E$ for the energy of $f$) so that the images
$f(j)$ of the neighbors of any $i \in \Gamma$ possess a unique
center of gravity. (This follows from \rf{5} and the fact that the
edge weights get multiplied by a factor of $\sqrt{2}$, that is,
become larger, under each refinement.) In that case, the maps
$\rho_\alpha (f)$ are
unambiguously defined for all $f$ with $E(f) \le E$. \\
$\rho_\alpha$ decreases (or, more precisely, does not increase)
the energy density $E_i(f)$ for all points in $I_\alpha$, but not
necessarily for those in the complement of $I_\alpha$.
Nevertheless, since by symmetry $\sum_{i \in I_1}E_i(f)=\sum_{i
\in
  I_2}E_i(f)=\frac{1}{2}E(f)$ (see \rf{5bb}), we have
\begin{lem}
\label{lem2.1} \bel{11} E(\rho_\alpha (f)) \le E(f)
\end{equation}
for all $f$.
\end{lem}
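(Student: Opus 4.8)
The plan is to exploit the bipartite structure to reduce the global energy inequality \rf{11} to the pointwise minimizing property of centers of gravity. Write $g:=\rho_\alpha(f)$. First I would record the two features of $g$ that follow directly from its construction. On the one hand, $g$ agrees with $f$ off $I_\alpha$: since $\Gamma$ is bipartite, every neighbor $j$ of a vertex $i\in I_\alpha$ lies in the complementary class, whose images are left untouched by $\rho_\alpha$, so $g(j)=f(j)$ for all such $j$. On the other hand, for each $i\in I_\alpha$ the point $g(i)$ is by definition a center of gravity of the images $f(j)$, $j\sim i$; that is, among all possible positions $p$ for the image of $i$ it minimizes the local contribution $p\mapsto\sum_{j\sim i}w^2(i,j)\,d^2(p,f(j))$. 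Evaluating this minimized quantity against the unchanged neighbor images, this is exactly the statement
\[
E_i(g)=\sum_{j\sim i}w^2(i,j)\,d^2(g(i),f(j))\le\sum_{j\sim i}w^2(i,j)\,d^2(f(i),f(j))=E_i(f)\qquad(i\in I_\alpha).
\]

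The second step is to pass from this one-sided, per-vertex estimate to the full energy, and here the bipartite symmetry does the work. By the same argument as in \rf{5bb}, every edge joins $I_1$ to $I_2$, so for \emph{any} map $h$ each edge contributes equally to the $I_1$-sum and the $I_2$-sum of energy densities, whence $\sum_{i\in I_\alpha}E_i(h)=\tfrac12 E(h)$. Applying this with $h=f$ and with $h=g$, and chaining in the pointwise bound above, gives
\[
E(g)=2\sum_{i\in I_\alpha}E_i(g)\le 2\sum_{i\in I_\alpha}E_i(f)=E(f),
\]
which is \rf{11}.

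The point that needs care --- and which is the only real subtlety --- is that $\rho_\alpha$ need not decrease, and may well increase, the energy densities $E_j$ at the vertices $j$ in the complement of $I_\alpha$, since those vertices see neighbors whose images have just been moved. A naive term-by-term comparison of $E(g)$ and $E(f)$ therefore fails. The bipartite hypothesis is precisely what circumvents this: it guarantees both that the minimization defining $g(i)$ for $i\in I_\alpha$ is carried out against neighbor images that $\rho_\alpha$ holds fixed (so that the minimizing property translates verbatim into $E_i(g)\le E_i(f)$), and that the total energy can be recovered as twice the sum over $I_\alpha$ alone, where we have control. I would also note that the argument is purely variational and uses nothing about uniqueness of centers of gravity; in the regime $c(N)>0$ with $\Gamma$ refined enough that these centers are unique, the same inequality holds with $g$ unambiguously defined, as observed before the statement.
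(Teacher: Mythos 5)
Your proof is correct and is essentially the paper's own argument: the minimizing property of the center of gravity gives $E_i(\rho_\alpha(f))\le E_i(f)$ for $i\in I_\alpha$ (the neighbors' images being unchanged by bipartiteness), and the symmetry $\sum_{i\in I_\alpha}E_i(h)=\tfrac12 E(h)$ from \rf{5bb} then yields \rf{11}. Your closing remarks on why a term-by-term comparison fails and on the irrelevance of uniqueness match the paper's surrounding discussion.
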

Moreover
\begin{lem}
\label{lem2.2} \bel{12} E(\rho_2(\rho_1 (f))) = E(f)
\end{equation}
if and only if $f$ is harmonic.
\end{lem}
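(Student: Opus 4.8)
The plan is to exploit the monotone chain supplied by Lemma \ref{lem2.1} together with the edge-counting symmetry that underlies \rf{5bb}. Since $\rho_2\circ\rho_1$ is a composition of two energy-non-increasing operations, Lemma \ref{lem2.1} gives
\bel{chain}
E(f)\ \ge\ E(\rho_1(f))\ \ge\ E(\rho_2(\rho_1(f))).
\qe
Thus if the two ends coincide, both inequalities are in fact equalities, and the problem reduces to deciding when a single relaxation $\rho_\alpha$ preserves the energy. The decisive tool is the identity $\sum_{i\in I_\alpha}E_i(h)=\tfrac12 E(h)$, valid for \emph{every} map $h:\Gamma\to N$ because each edge of the bipartite graph joins $I_1$ to $I_2$ (cf.\ \rf{5bb}). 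It lets us recover the global energy from the half-sum over a single class $I_\alpha$ --- exactly the class whose densities $\rho_\alpha$ minimizes.

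For the \textbf{if} direction, assume $f$ is harmonic. Then for each $i$ the point $f(i)$ is a center of gravity of its neighbours, and in the regime $c(N)>0$ with $\Gamma$ sufficiently refined this center is unique; hence $f(i)$ is the unique value that $\rho_\alpha$ assigns at $i$. Therefore $\rho_1(f)=f$ and $\rho_2(f)=f$, so $\rho_2(\rho_1(f))=f$ and \rf{12} holds.

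For the \textbf{only if} direction, suppose $E(\rho_2(\rho_1(f)))=E(f)$. By \rf{chain} this forces $E(\rho_1(f))=E(f)$. Apply the identity to $h=\rho_1(f)$: for $i\in I_1$ every neighbour lies in $I_2$ and is untouched by $\rho_1$, so $\rho_1(f)(i)$ minimizes the density at $i$ and $E_i(\rho_1(f))\le E_i(f)$. Writing $E(\rho_1(f))=2\sum_{i\in I_1}E_i(\rho_1(f))$ and $E(f)=2\sum_{i\in I_1}E_i(f)$, the equality $E(\rho_1(f))=E(f)$ forces the termwise equalities $E_i(\rho_1(f))=E_i(f)$ for all $i\in I_1$. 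Hence each $f(i)$ with $i\in I_1$ already realizes the minimum, i.e.\ is a center of gravity of its neighbours, so $f$ is harmonic on $I_1$; by uniqueness this also yields $\rho_1(f)=f$. Consequently the hypothesis reads $E(\rho_2(f))=E(f)$, and the same argument with $I_1$ and $I_2$ exchanged shows that every $f(i)$ with $i\in I_2$ is a center of gravity of its neighbours. Thus $f$ is harmonic on all of $I=I_1\cup I_2$.

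The step I expect to be delicate is precisely the asymmetry flagged before Lemma \ref{lem2.1}: $\rho_1$ may \emph{raise} the densities $E_i$ at vertices of $I_2$, so one cannot reason density-by-density over all of $I$ simultaneously. This is what forces the two-stage argument --- expressing the energy through the half-sum over the class currently being relaxed, and then invoking uniqueness of centers of gravity to pass from $\rho_1(f)=f$ to a statement about $f$ itself on $I_2$, rather than merely about $\rho_1(f)$.
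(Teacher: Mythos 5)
Your proof is correct, and it is precisely the argument the paper intends but omits (the lemma is stated without proof): you use the bipartite half-sum identity $\sum_{i\in I_\alpha}E_i(h)=\tfrac12 E(h)$ that the text sets up immediately before Lemma \ref{lem2.1}, deduce termwise equality of the densities over the class being relaxed, and correctly invoke uniqueness of centers of gravity (available in the paper's standing regime $c(N)>0$ with $\Gamma$ sufficiently refined) to get $\rho_1(f)=f$ before running the argument on $I_2$. Your closing remark also identifies the right subtlety --- without that uniqueness step one would only learn that $\rho_1(f)$, not $f$, is a center of gravity at the $I_2$ vertices.
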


\begin{theo}
Let $(N,d)$ be a compact metric space that admits centers of
gravity. Let $\Gamma$ be a finite weighted graph. Then, for any
map $f:\Gamma \to N$, the iterations $f_n: = (\rho_2 \rho_1)^n f$
contain a subsequence converging to a harmonic map.
\end{theo}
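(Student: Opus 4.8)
The plan is to run the standard variational descent argument: the full sweep $\rho_2\rho_1$ never increases the energy, the space of maps is compact because $N$ is, and the energy characterization of harmonicity (Lemma~\ref{lem2.2}) identifies the limit as a map preserved, up to energy, by the scheme. First I would record that by Lemma~\ref{lem2.1} applied twice, $E(f_{n+1}) = E(\rho_2\rho_1 f_n) \le E(\rho_1 f_n) \le E(f_n)$, so the sequence $\big(E(f_n)\big)_n$ is non-increasing and bounded below by $0$; hence it converges to some $E_\infty \ge 0$. Since $\Gamma$ is finite, $C = C(\Gamma,N)$ is the finite power $N^{|I|}$, which is compact, so $(f_n)_n$ has a subsequence $f_{n_k} \to f_\infty$ in $C$. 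Note also that $E$ is continuous on $C$, being a finite sum of terms $w^2(i,j)\,d^2(f(i),f(j))$ with $d$ continuous; and since $N$ is compact its diameter, hence $E(f)$, is bounded uniformly in $f$.

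The crux is the continuity of the relaxation operators, and I expect it to be the main obstacle. I would show that $\rho_1$ and $\rho_2$, hence $\rho_2\rho_1$, are continuous maps $C \to C$, which reduces to the continuous dependence of a center of gravity on its defining points. Fixing the weights, let $p_j^{(k)} \to p_j$ and set $F^{(k)}(p) = \sum_j w_j^2\, d^2(p,p_j^{(k)})$ and $F(p) = \sum_j w_j^2\, d^2(p,p_j)$; on the compact space $N$ the $F^{(k)}$ converge to $F$ uniformly, so $\min F^{(k)} \to \min F$, and any subsequential limit $q^\ast$ of the minimizers $q^{(k)}$ satisfies $F(q^\ast) = \min F$. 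Uniqueness of the minimizer then forces $q^{(k)} \to q^\ast = \mathrm{argmin}\,F$, i.e.\ continuity. The obstacle is precisely that this presupposes uniqueness: globally a center of gravity need not be unique, and then $\rho_\alpha$ is only a multivalued selection and continuity can fail. This is where the setup preceding the theorem is used: since $E$ is a priori bounded, by \rf{5} together with the refinement procedure, which multiplies edge weights by $\sqrt2$, we may work in the regime where the images of the neighbors of every vertex lie within the convexity radius, so each relevant center of gravity is unique and each $\rho_\alpha$ is a well-defined, single-valued, continuous map.

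With continuity in hand I would pass to the limit. On the one hand, $E(f_\infty) = \lim_k E(f_{n_k}) = E_\infty$, since $\big(E(f_{n_k})\big)_k$ is a subsequence of a convergent sequence. On the other hand, continuity of $\rho_2\rho_1$ gives $f_{n_k+1} = \rho_2\rho_1 f_{n_k} \to \rho_2\rho_1 f_\infty$, and $\big(E(f_{n_k+1})\big)_k$ is again a subsequence of the convergent sequence $\big(E(f_n)\big)_n$, so continuity of $E$ yields $E(\rho_2\rho_1 f_\infty) = \lim_k E(f_{n_k+1}) = E_\infty = E(f_\infty)$. Thus the energy is preserved under one full sweep applied to $f_\infty$, and Lemma~\ref{lem2.2} (equation \rf{12}) then identifies $f_\infty$ as harmonic, completing the argument.
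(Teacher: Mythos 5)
Your argument is correct and is essentially the paper's: compactness of $N^{|I|}$ gives a convergent subsequence, monotonicity of the energy under $\rho_2\rho_1$ forces $E(\rho_2\rho_1 f_\infty)=E(f_\infty)$ in the limit, and Lemma~\ref{lem2.2} then identifies the limit as harmonic. The only differences are cosmetic --- you use convergence of the monotone sequence $\big(E(f_n)\big)_n$ where the paper instead works with the relation $f_{\nu(n+1)}=(\rho_2\rho_1)^{\mu(n)}f_{\nu(n)}$ between consecutive subsequence terms --- and your explicit verification that $\rho_\alpha$ is continuous (via uniform convergence of the functionals and the uniqueness of centers of gravity arranged in the setup preceding the theorem) supplies a step that the paper asserts without proof when it writes that $\rho_2\rho_1 f_{\nu(n)}$ converges to $\rho_2\rho_1 f_0$.
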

\begin{proof}
Since $N$ is compact, we can find some sequence $\nu(n)$ of
positive integers going to infinity for which $f_{\nu(n)}(i)$
converges to some point $f_0(i) \in N$ for every vertex $i$ of the
finite graph $\Gamma$. We have \bel{13} f_{\nu(n+1)}=(\rho_2
\rho_1)^{\mu(n)}f_{\nu(n)} \text{ for some } \mu(n) \in {\mathbb
N}.
\end{equation}
Since the metric $d$ behaves continuously under convergence (since
it defines the topology of $N$), we have \bel{14} E(f_0) = \lim_{n
\to \infty} E(f_{\nu(n)}).
\end{equation}
But then also \ba \nonumber
\lim E(f_{\nu(n+1)}) &=& \lim E((\rho_2 \rho_1)^{\mu(n)}f_{\nu(n)}) \\
\nonumber &\le& \lim E(f_{\nu(n)}), \text{ by Lemma \ref{lem2.1}
because of }
\mu(n) \ge 1 \\
\nonumber &=&E(f_0).
\end{eqnarray}
Thus, equality has to hold throughout. Moreover, $\rho_2 \rho_1
f_{\nu(n)}$ converges to $\rho_2 \rho_1 f_0$, and so \ba \nonumber
E(\rho_2 \rho_1 f_0)&=&\lim E((\rho_2
\rho_1)^{\mu(n)+1}f_{\nu(n)})
\text{ as before }\\
\nonumber &=& E(f_0) \text{ from the preceding observation. }
\end{eqnarray}
Lemma \ref{lem2.2} then implies that $f_0$ is harmonic.
\end{proof}

\noindent The assumption of the theorem that the space $N$ admits
centers of gravity is satisfied when $N$ has an upper courvature bound. For $k \in {\mathbb R}$, we denote by $D_k$ the diameter
of the $n$-dimensional, complete, simply connected model space
with constant sectional curvature $k$.  We then have the following
result from Alexandrov's theory (see \cite{BN}):
\begin{lem}
Let $X$ be an Alexandrov space with curvature bounded above \\ by
$k$. Then, for every $x \in X$, there exists a positive number
$R_x \in (0 , \frac{D_k}{2}]$ such that the closed metric ball
centered at $x$ and with radius $R_x$, $\overline{B} (x , R_x)$,
is a convex subset in $X$.
\end{lem}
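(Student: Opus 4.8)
The plan is to reduce the assertion to the corresponding elementary fact in the model space $M_k$ by means of the triangle comparison inequality that defines an upper curvature bound. Recall that ``curvature bounded above by $k$'' is a local condition: every point $x \in X$ possesses a neighborhood $U$ that is CAT$(k)$, meaning that any two of its points at distance less than $D_k$ are joined by a unique geodesic lying in $U$, and that for every geodesic triangle in $U$ of perimeter less than $2D_k$ the distances between its points are bounded above by the corresponding distances in a comparison triangle in $M_k$. First I would fix such a neighborhood $U$ and choose $R_x \in (0, D_k/2]$ small enough that $\overline{B}(x, R_x) \subseteq U$ and that any two points of $\overline{B}(x, R_x)$, being at distance at most $2R_x \le D_k$, are joined by a unique geodesic contained in $U$.

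Next, let $p, q \in \overline{B}(x, R_x)$ and let $\gamma : [0,1] \to U$ be the geodesic from $p$ to $q$; I must show that $d(x, \gamma(t)) \le R_x$ for all $t$. Consider the geodesic triangle with vertices $x, p, q$ and its comparison triangle $\overline{\triangle}(\bar x, \bar p, \bar q)$ in $M_k$, and let $\bar\gamma(t)$ denote the point on the side $[\bar p, \bar q]$ corresponding to $\gamma(t)$. The CAT$(k)$ inequality then gives
\[
d(x, \gamma(t)) \le d_{M_k}(\bar x, \bar\gamma(t))
\]
for every $t$. Since $d_{M_k}(\bar x, \bar p) = d(x,p) \le R_x$ and $d_{M_k}(\bar x, \bar q) = d(x, q) \le R_x$, the points $\bar p, \bar q$ lie in the closed ball $\overline{B}(\bar x, R_x) \subseteq M_k$.

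It therefore suffices to establish the purely model-space statement that $\overline{B}(\bar x, R_x)$ is convex in $M_k$ when $R_x \le D_k/2$. This is elementary and splits into cases. For $k \le 0$ one has $D_k = \infty$, and $M_k$ is Euclidean or hyperbolic space, in which every metric ball is convex because the distance-to-a-point function is convex along geodesics. For $k > 0$, $M_k$ is a round sphere with $D_k = \pi/\sqrt{k}$, and a closed ball of radius at most $D_k/2$ is a closed spherical cap of angular radius at most $\pi/2$, so the geodesic joining two of its points stays within the cap. In all cases $d_{M_k}(\bar x, \bar\gamma(t)) \le R_x$, whence $d(x, \gamma(t)) \le R_x$ and $\gamma(t) \in \overline{B}(x, R_x)$, which is the claimed convexity.

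The step I expect to require the most care is the localization in the first paragraph: one must choose $R_x$ small enough that the geodesic realizing the distance between two points of the ball does not escape the CAT$(k)$ neighborhood $U$, since the comparison inequality is only available there. This is what forces $R_x$ to depend on $x$ and explains why the statement is local rather than global. A companion subtlety is the borderline radius $R_x = D_k/2$ in positive curvature, where the perimeter of the triangle $x,p,q$ may approach $2D_k$ and the comparison configuration degenerates; however, since the lemma asserts only the existence of \emph{some} admissible $R_x$, it is harmless to shrink $R_x$ strictly below $D_k/2$ and avoid this case altogether.
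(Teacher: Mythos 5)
The paper does not prove this lemma at all: it is quoted as a known result from Alexandrov's theory with a reference to Berestovskij--Nikolaev, so there is no argument in the text to compare yours against. Your proof is the standard CAT$(k)$ comparison argument (essentially Bridson--Haefliger, Prop.\ II.1.4) and is correct in substance: localize to a CAT$(k)$ neighborhood, compare the triangle $x,p,q$ with its model triangle in $M_k$, and reduce to convexity of small balls in the model space. You correctly flag the two delicate points, namely that $R_x$ must be small enough that minimizing geodesics between points of the ball cannot leave the comparison neighborhood (e.g.\ take $\overline{B}(x,3R_x)\subseteq U$, so that any minimizing geodesic between $p,q\in\overline{B}(x,R_x)$ stays in $U$ and is therefore the unique one you compare), and that at the borderline radius $R_x=D_k/2$ for $k>0$ the perimeter bound $4R_x<2D_k$ fails and the closed hemisphere is not convex, which is harmlessly avoided by taking $R_x$ strictly smaller since the lemma only asserts existence of some $R_x\in(0,D_k/2]$.
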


\noindent {\bf Remark}: When $N$ has non-positive curvature in the
sense of Alexandrov or Busemann, our theorem is contained
in a general theorem of the first author (see \cite{J1}) and when
$N$ is a compact Riemannian manifold, it follows from the fact
that any homotopy class contains at least one harmonic map (see
\cite{KS}).
\\
Since distances of neighboring image points are controlled by the
energy of a map, see \rf{5}, we see that if the refinement radius
$r(N)$ is positive, we may control the geometric homotopy class by
assuming an energy bound and sufficiently refining the graph
$\Gamma$.


\begin{thebibliography}{999}

\bibitem{BN} V.N.Berestovskij, I.G.Nikolaev, {\sl Multidimensional
    generalized Riemannian spaces}, in: Yu.G.Reshetnyak (ed.), {\sl
    Geometry} IV, ENS 70, Springer-Verlag, 1993, pp.165-250.\\

\bibitem{BH} M. R. Bridson, A. Haefliger, {\sl Metric spaces of
non-positive curvature}, Springer-Verlag, 1999. \\

\bibitem{IN} H. Izeki, S. Nayatani, {\sl Combinatorial harmonic maps
and discrete-group actions on Hadamard spaces}, Geometria Dedicata, 2005 \\

\bibitem{J1} J. Jost, {\sl Equilibrium maps between metric spaces},
Calc. Var. 2, (1994), 173-204. \\

\bibitem{J2} J. Jost, {\sl Convex functionals and generalized harmonic
maps into spaces of non positive curvature}, Comment. Math.
Helvetici
70, (1995), 659-673. \\

\bibitem{J2a} J. Jost, {\sl Nonpositive curvature}, Birkh\"auser, 1997

\bibitem{J3} J. Jost, {\sl Riemannian geometry and geometric analysis},
2nd. ed. , Springer, 2000. \\


\bibitem{KS} M. Kotani, T. Sunada, {\sl Standard realizations of
crystal lattices via harmonic maps}, Transactions of the AMS 353,
No.1,
(2000), 1-20.\\


\bibitem{Ni} I. Nikolaev, {\sl The tangent cone of an Alexandrov space
of curvature $\le K$}, Manuscripta Math. 86, (1995), 137-147. \\

\bibitem{W} M.-T. Wang, {\sl Generalized harmonic maps and
representations of discrete groups}, Comm. Anal. Geom. 8, No.3, (2000), 545-563. \\

\end{thebibliography}
\end{document}